\tikzset{cd/.style=matrix of math nodes,row sep=2em,column sep=2em, text height=1.5ex, text depth=0.5ex}
\tikzset{cdar/.style=->,auto}
\setlist[enumerate,1]{label=\textup{(\arabic*)}}
\setlist[enumerate,2]{label=\textup{(\alph*)}}
\newcommand*{\MRref}[2]{ \href{http://www.ams.org/mathscinet-getitem?mr=#1}{MR #1}}
\newcommand*{\arxiv}[1]{ \href{http://www.arxiv.org/abs/#1}{arXiv:#1}}
\renewcommand{\PrintDOI}[1]{\href{http://dx.doi.org/#1}{DOI #1}%
  \IfEmptyBibField{volume}{, (to appear in print)}{}}
\renewcommand{\PrintDOI}[1]{\href{http://dx.doi.org/\detokenize{#1}}{doi: \detokenize{#1}}%
  \IfEmptyBibField{pages}{, (to appear in print)}{}}
\numberwithin{equation}{section}
\theoremstyle{plain}
\newtheorem{theorem}[equation]{Theorem}
\newtheorem{result}[equation]{Result}
\newtheorem{proposition}[equation]{Proposition}
\newtheorem{corollary}[equation]{Corollary}
\theoremstyle{definition}
\newtheorem{definition}[equation]{Definition}
\theoremstyle{remark}
\newtheorem{remark}[equation]{Remark}
\newtheorem{example}[equation]{Example}
\newcommand*{\nb}{\nobreakdash}
\newcommand*{\Star}{\(^*\)\nb-}
\newcommand*{\C}{\mathbb C}
\newcommand*{\Bialg}[1]{(#1,\Comult[#1])}
\newcommand*{\DuBialg}[1]{(\hat{#1},\DuComult[#1])}
\newcommand*{\G}[1][G]{\mathbb #1}
\newcommand*{\DuG}[1][G]{\widehat{\mathbb{#1}}}
\newcommand*{\Qgrp}[2]{\mathbb{#1}=\Bialg{#2}}
\newcommand*{\DuQgrp}[2]{\widehat{\mathbb{#1}}=\DuBialg{#2}}
\newcommand*{\Comult}[1][]{\Delta_{#1}}
\newcommand*{\DuComult}[1][]{\hat{\Delta}_{#1}}
\newcommand*{\Bound}{\mathbb B}
\newcommand*{\Comp}{\mathbb K}
\newcommand*{\Contvin}{\textup C_0}
\newcommand*{\Mor}{\textup{Mor}}
\newcommand*{\Id}{\textup{id}}
\newcommand*{\Multunit}[1][]{\mathbb{W}^{#1}}
\newcommand*{\multunit}[1][]{\textup{W}^{#1}}
\newcommand*{\DuMultunit}[1][]{\widehat{\mathbb{W}}{}^{#1}}
\newcommand*{\bichar}{\textup{V}}
\newcommand*{\corep}[1]{\textup{#1}}                
\newcommand*{\Corep}[1]{\mathbb{#1}}             
\newcommand*{\Drinfdouble}[1]{\mathfrak{D}(#1)}
\newcommand*{\Codouble}[1]{\mathfrak{D}({#1})\sphat\text{\space}}
\newcommand*{\DrinfDoubAlg}{\mathcal{D}}
\newcommand*{\CodoubAlg}{\widehat{\mathcal{D}}}
\newcommand*{\Flip}{\Sigma}
\newcommand*{\flip}{\sigma}
\newcommand*{\Cst}{\textup C^*}
\newcommand*{\Cred}{\textup C^*_\textup r}
\newcommand*{\CLS}{\mathrm{CLS}}
\newcommand*{\Cstcat}{\mathfrak{C^*alg}}
\newcommand*{\Hils}[1][H]{\mathcal #1}
\newcommand*{\Mult}{\mathcal M}
\newcommand*{\U}{\mathcal U}
\newcommand*{\defeq}{\mathrel{\vcentcolon=}}
\newcommand*{\norm}[1]{\lVert#1\rVert}
\begin{document}
\title{The Haagerup property for Drinfeld doubles}

\author{Sutanu Roy}
\email{sr26@uOttawa.ca}
\address{Department of Mathematics and Statistics\\
              585 King Edward\\ 
              K1N 6N5 Ottawa\\
              Canada}
  
\begin{abstract}
 We show that Drinfeld's 
 double group construction for locally compact quantum groups  
 preserves the Haagerup property. This shows that the   
 Drinfeld doubles of the quantum groups, \(\Contvin(\mathbb{F}_{2})\), 
 \(\textup{SU}_{q}(2)\), \(\textup{SU}_{q}(1,1)_{\text{ext}}\), 
 quantum \(ax+b\), quantum~\(az+b\), and 
 \(\textup{E}_{q}(2)\) have the Haagerup property.
\end{abstract}

\subjclass[2010]{Primary 81R50, Secondary 22D05, 46L65, 46L89}
\keywords{Locally compact quantum groups, Haagerup property, Drinfeld double}
\maketitle

\section{Introduction}
  \label{sec:intro} 
  The Haagerup approximation property for groups 
  is weaker than the notion of amenability. 
  One of the equivalent formulations 
  of the Haagerup property for groups 
  is the existence of a mixing representation 
  that weakly contains the trivial representation. 
  Following this characterisation,   
  Daws, Fima, Skalski and White introduced 
  the Haagerup property for locally compact quantum groups 
  in~\cite{Daws-Fima-Skalski-White:Haagerup_prop_qnt_grp}.
  Moreover, \cite{Daws-Fima-Skalski-White:Haagerup_prop_qnt_grp}*{Proposition 5.2} 
  shows that the Haagerup property for a quantum group~\(\G\) 
  follows from the coamenability of the dual quantum group 
  \(\DuG\). As an application, they show that 
  quantum~\(ax+b\) in~\cite{Woronowicz:Quantum_azb}, 
  quantum~\(az+b\) in~\cite{Woronowicz-Zakrzewski:Quantum_axb},
  and~\(\textup{E}_{q}(2)\) in~\cite{Woronowicz:Qnt_E2_and_Pontr_dual}, 
  and their duals have the Haagerup property 
  (see \cite{Daws-Fima-Skalski-White:Haagerup_prop_qnt_grp}*{Example 5.4}).
  All of these examples are amenable and coamenable. 
  Also, like compact groups, all compact quantum groups 
  have the Haagerup property.
  The quantum group~\(\Contvin(\mathbb{F}_{2})\) associated to 
  the free group~\(\mathbb{F}_{2}\) is an example of a discrete 
  non\nb-amenable quantum group with the Haagerup property.
  The recent work of Caspers~\cite{Caspers:Weak_amenb_SU_1_1} 
  shows that the extended~\(\textup{SU}_{q}(1,1)\) groups 
  in \cites{Koelink-Kustermans:normalizer_SU_1_1}, 
  are examples of non\nb-classical, non\nb-discrete and 
  non\nb-amenable quantum groups that enjoy the Haagerup property. 
  Moreover, their duals also have the 
  Haagerup property. 
  
  The quantum double construction of Drinfeld 
  or the \emph{Drinfeld double} for Hopf algebras 
  is one of the fundamental results of the pioneering 
  work of Drinfeld \cite{Drinfeld:Quantum_groups}.
  Roughly, the Drinfeld double of a finite dimensional 
  Hopf algebra~\(H\) over a field~\(k\) 
  is a Hopf algebra~\(\Drinfdouble{H}\) such that the factors 
  \(H\) and \(H^{*}\defeq\textup{Hom}_{k}(H,k)\) inside
  \(\Drinfdouble{H}\) do not commute. The quantum double 
  construction for analytic quantum groups was developed 
  in many different frameworks, along with the development 
  of a general theory of compact and locally compact quantum 
  groups. In~\cite{Podles-Woronowicz:Quantum_deform_Lorentz}, 
  Podle\'s and Woronowicz introduced the \emph{double group 
  construction}, as the dual of the Drinfeld double, 
  for compact quantum groups~\cite{Woronowicz:CQG}. 
  In~\cite{Baaj-Vaes:Double_cros_prod}, Baaj and Vaes 
  obtained Drinfeld double for regular 
  (in the sense of Baaj and Skandalis~\cite{Baaj-Skandalis:Unitaires}) 
  \(\Cst\)\nb-algebraic locally compact quantum groups 
  (see \cites{Kustermans-Vaes:LCQG, Masuda-Nakagami-Woronowicz:C_star_alg_qgrp}) 
  as a special case of the dual of the generalised double crossed product construction. 
  
  In general, for \(\Cst\)\nb-algebraic locally compact quantum groups 
  (see \cites{Kustermans-Vaes:LCQG, Masuda-Nakagami-Woronowicz:C_star_alg_qgrp}), 
  this is generalised by Masuda, Nakagami and Woronowicz 
  \cite{Masuda-Nakagami-Woronowicz:C_star_alg_qgrp}*{Section 8}, under the name
  \emph{quantum codouble}. We shall follow this terminology. 
  (What we call quantum codouble is called Drinfeld double 
  in~\cite{Nest-Voigt:Poincare}*{Section 3}).
   
  The main result of this article is the following theorem:
   \begin{result}
    \label{res:Haagerup_prop_drinf_doub}
    Drinfeld's double group construction preserves the Haagerup property. 
    That is, the Drinfeld double of~\(\G\) has the Haagerup property 
    whenever the quantum group~\(\G\) and its dual, \(\DuG\) both have 
    the Haagerup property. 
  \end{result}   
  
   After introducing basic definitions in Section~\ref{sec:LCQG},
   we briefly extract the~\(\Cst\)\nb-bialgebra structure 
   from their construction in Section~\ref{sec:drinf}.
   In the last Section~\ref{sec:Haagerup_prop}, we prove our  
   main result~\ref{res:Haagerup_prop_drinf_doub}. Then, 
   we show that the Drinfeld doubles of the classical group~\(\mathbb{F}_{2}\), 
   quantum~\(ax+b\), quantum~\(az+b\), quantum~\(\textup{E}(2)\), 
   quantum~\(\textup{SU}(2)\) and extended quantum~\(\textup{SU}(1,1)\)
   have the Haagerup property.  
   
   \subsection{Basic notation}
   All Hilbert spaces and \(\Cst\)\nb-algebras are assumed to be separable. 
   For two norm\nb-closed subsets~\(X\) and~\(Y\) of a~\(\Cst\)\nb-algebra, 
   let 
   \[
     X\cdot Y\defeq\{xy : x\in X, y\in Y\}^{\textup{CLS}},
   \]
  where CLS stands for the~\emph{closed linear span}.
  
  For a~\(\Cst\)\nb-algebra~\(A\), let~\(\Mult(A)\) be its multiplier algebra and 
   let \(\U(A)\) be the group of unitary multipliers of~\(A\).  
  Let~\(\Cstcat\) be the category of \(\Cst\)\nb-algebras with
  nondegenerate \Star{}homomorphisms \(\varphi\colon A\to\Mult(B)\) as
  morphisms \(A\to B\); let \(\Mor(A,B)\) denote this set of morphisms.
  
   For a Hilbert space~\(\Hils\), \(\Comp(\Hils)\) and~\(\Bound(\Hils)\) 
   denote the \(\Cst\)\nb-algebras of compact and bounded operators 
   acting on~\(\Hils\), respectively. A \emph{representation} of a
  \(\Cst\)\nb-algebra~\(A\) on a Hilbert space 
  is an element of~\(\Mor(A,\Comp(\Hils))\). The group of unitary 
  operators on a Hilbert space~\(\Hils\) 
  is denoted by \(\U(\Hils)\).

  We write~\(\Flip\) for the tensor flip \(\Hils\otimes\Hils[K]\to
  \Hils[K]\otimes\Hils\), \(x\otimes y\mapsto y\otimes x\), for two 
  Hilbert spaces \(\Hils\) and~\(\Hils[K]\).  We write~\(\flip\) for the
  tensor flip isomorphism \(A\otimes B\to B\otimes A\) for two
  \(\Cst\)\nb-algebras \(A\) and~\(B\), where~\(\otimes\) denotes 
  the minimal tensor product of~\(\Cst\)\nb-algebras. 
   
 \section{Locally compact quantum groups}
  \label{sec:LCQG}
   For a general theory of \(\Cst\)\nb-algebraic locally compact quantum groups 
   see~\cites{Masuda-Nakagami-Woronowicz:C_star_alg_qgrp, 
   Kustermans-Vaes:LCQG}. 
      
   \begin{definition}[\cite{Baaj-Skandalis:Unitaires}*{D\'efinition 0.1}]
    \label{def:C_star_bialg} 
    A~\(\Cst\)\nb-\emph{bialgebra}~\(\Bialg{A}\) is a~\(\Cst\)\nb-algebra 
    \(A\) and a comultiplication~\(\Comult[A]\in\Mor(A,A\otimes A)\)
    that is coassociative:
    \((\Id_{A}\otimes\Comult[A])\circ\Comult[A]=(\Comult[A]\otimes\Id_{A})\circ\Comult[A]\).
    Moreover, if~\(\Comult[A]\) satisfies the cancellation property, 
    \[
      \Comult[A](A)\cdot (1_{A}\otimes A)=\Comult[A](A)\cdot (A\otimes 1_{A})=A\otimes A,
    \]
    \(\Bialg{A}\) is a~\emph{bisimplifiable} \(\Cst\)\nb-bialgebra. 
   \end{definition} 
   Let~\(\varphi\) be a faithful (approximate) KMS weight
   (see~\cite{Kustermans-Vaes:LCQG}*{Section 1}) 
   on~\(A\). The set of all positive 
   \(\varphi\)\nb-integrable elements is 
   defined by~\(\mathcal{M}_{\varphi}^{+}
   \defeq\{a\in A^{+}:\varphi(a)<\infty\}\). 
   Moreover, \(\varphi\) is  called
   \begin{enumerate} 
   \item \emph{left invariant} if     
   \(\omega((\Id_{A}\otimes\varphi)\Comult[A](a))=\omega(1)\varphi(a)\)
     for all~\(\omega\in A_{*}^{+}\), \(a\in\mathcal{M}^{+}_{\varphi}\);
   \item \emph{right invariant} if       
   \(\omega((\varphi\otimes\Id_{A})\Comult[A](a))=\omega(1)\varphi(a)\)
   for all~\(\omega\in A_{*}^{+}\), \(a\in\mathcal{M}^{+}_{\varphi}\).
   \end{enumerate}
   \begin{definition}[\cite{Kustermans-Vaes:LCQG}*{Definition 4.1}]
    \label{def:Qnt_grp}
    A \emph{locally compact quantum group} 
    (\emph{quantum groups} from now onwards) is 
    a bisimplifiable~\(\Cst\)\nb-bialgebra~\(\Qgrp{G}{A}\) with left
    and right invariant approximate KMS weights~\(\varphi\) and~\(\psi\), 
    respectively. 
   \end{definition}
   By Theorem~\(7.14\) and~\(7.15\) 
   in~\cite{Kustermans-Vaes:LCQG}, the invariant 
   weights~\(\varphi\) and~\(\psi\) are unique up to a 
   positive scalar factor; 
   hence they are called the left and right 
   \emph{Haar weights} for \(\G\). Moreover,  
   there is a unique (up to isomorphism) Pontrjagin 
   dual~\(\DuQgrp{G}{A}\) of~\(\G\), which is again a quantum group. 
   
   Next we consider the GNS triple~\((\textup{L}^{2}(\G),\pi,\Lambda)\) 
   for~\(\psi\). There is a right \emph{multiplicative unitary} 
   \(\Multunit\in\U(\textup{L}^{2}(\G)\otimes\textup{L}^{2}(\G))\). Equivalently, 
   \(\Multunit\) satisfies the pentagon equation:
     \begin{equation}
      \label{eq:pentagon}
      \Multunit_{23}\Multunit_{12}
     = \Multunit_{12}\Multunit_{13}\Multunit_{23}
     \qquad
     \text{in \(\U(\textup{L}^{2}(\G)\otimes\textup{L}^{2}(\G)\otimes\textup{L}^{2}(\G)).\)}
   \end{equation}
   The right Haar weight version of the result~\cite{Kustermans-Vaes:LCQG}*{Proposition 
   6.10} ensures the manageability 
   (see~\cite{Soltan-Woronowicz:Remark_manageable}*{Definition 2.1}) 
   of~\(\Multunit\). The theory of manageable multiplicative unitaries 
   \cite{Woronowicz:Multiplicative_Unitaries_to_Quantum_grp} gives:
   \begin{enumerate}
   \item the dual multiplicative unitary 
   \(\DuMultunit\defeq\Flip\Multunit[*]\Flip\in\U(\textup{L}^{2}(\G)\otimes\textup{L}^{2}(\G))\) 
   is also manageable.
    \item the slices of~\(\Multunit\) defined by 
    \begin{align*}
     A &\defeq \{(\omega\otimes\Id_{\textup{L}^{2}(\G)})\Multunit :
    \omega\in\Bound(\textup{L}^{2}(\G))_*\}^\CLS ,\\
     \hat{A} &\defeq\{(\Id_{\textup{L}^{2}(\G)}\otimes\omega)\Multunit :
    \omega\in\Bound(\textup{L}^{2}(\G))_*\}^\CLS ,
   \end{align*} 
   are nondegenerate \(\Cst\)\nb-subalgebras of \(\Bound(\textup{L}^{2}(\G))\).
    \item  \(\Multunit\in\U(\hat{A}\otimes A)\subseteq\U(\textup{L}^{2}(\G)\otimes
  \textup{L}^{2}(\G))\). We write~\(\multunit\) for~\(\Multunit\) viewed as a unitary multiplier 
  of~\(\hat{A}\otimes A\);
    \item the comultiplication maps~\(\Comult[A]\) and 
  \(\DuComult[A]\) are characterised by the following conditions:  
  \begin{alignat}{2}
   \label{eq:aux_W_char_in_first_leg}
    (\Id_{A}\otimes\Comult)\multunit &=\multunit_{12}\multunit_{13}
   &\qquad\text{in~\(\U(A\otimes\hat{A}\otimes\hat{A})\)}\\
   \label{eq:W_char_in_first_leg}
  (\DuComult\otimes\Id_{A})\multunit &=\multunit_{23}\multunit_{13}
  &\qquad\text{in~\(\U(\hat{A}\otimes\hat{A}\otimes A)\).}
 \end{alignat}  
  \item there exist antiunitary involutive operators~\(J\) and~\(\hat{J}\)
  on~\(\textup{L}^{2}(\G)\). They implement the unitary 
  antipodes~\(\textup{R}\) and~\(\hat{R}\) on \(\G\) and~\(\DuG\) 
  as follows:
  \[
     \textup{R}(a)\defeq\hat{J}a^{*}\hat{J} 
     \quad\text{for~\(a\in A\)}
     \quad\text{and}\quad
     \hat{\textup{R}}(\hat{a})\defeq J a^{*}J
     \quad\text{for~\(\hat{a}\in\hat{A}\)}.
  \]   
\end{enumerate}
  The unitary \(\multunit\in\Mult(\hat{A}\otimes A)\) is called the \emph{reduced bicharacter} 
  of~\(\G\).
  \begin{definition}[\cite{Bedos-Tuset:Amen_coamen_lcqg}*{Definition 3.1}]
  A quantum group \(\Qgrp{G}{A}\) is \emph{coamenable} if it has a 
  \emph{bounded counit}. Equivalently, there is a unique 
  \Star{}homomorphism \(e^{A}\in\Mor(A,\C)\) such that  
 \begin{equation}
 \label{eq:coamen}
  (e^{A}\otimes\Id_{A})\Comult[A]=\Id_{A}.
 \end{equation}
\end{definition} 
 \section{Duality between quantum codoubles and Drinfeld doubles}
  \label{sec:drinf}
 Let~\(\Qgrp{G}{A}\) be a quantum group, 
 let \(\DuQgrp{G}{A}\) be its dual, and let~\(\multunit\in\U(\hat{A}\otimes 
 A)\) be the reduced bicharacter.
 
 Define 
 \[
    \CodoubAlg\defeq \hat{A}\otimes A
    \qquad\text{and}\qquad 
    \DuComult[\DrinfDoubAlg](\hat{a}\otimes a)\defeq 
    (\Id_{\hat{A}}\otimes\flip^{\multunit}\otimes\Id_{A})
    (\DuComult[A]\otimes\Comult[A]).
  \]  
  Here~\(\flip^{\multunit}\in\Mor(\hat{A}\otimes A,A\otimes\hat{A})\) 
  denotes the flip twisted by~\(\multunit\) defined by
  \(\flip^{\multunit}(\cdot)\defeq\flip(\multunit[*](\cdot)\multunit)\).
 Define~\(\Corep{U}\defeq\Multunit(\hat{J}\otimes J)\Multunit(\hat{J}\otimes J)\in\U
 (\textup{L}^{2}(\G)\otimes\textup{L}^{2}(\G))\). Theorem~\(8.7\) in 
 \cite{Masuda-Nakagami-Woronowicz:C_star_alg_qgrp}  
 shows that \(\Bialg{\CodoubAlg}\) is a quantum group and that 
 \begin{equation}
  \label{eq:mu_codoub}
   \DuMultunit[\DrinfDoubAlg]\defeq\Corep{U}_{12}\DuMultunit_{13}\Corep{U}^{*}_{12}\Multunit_{24}
   \in\U(\textup{L}^{2}(\G)\otimes\textup{L}^{2}(\G)\otimes\textup{L}^{2}(\G)
   \otimes\textup{L}^{2}(\G))
  \end{equation}
 is a manageable multiplicative unitary for it. The quantum codouble 
 of~\(\G\), denoted by \(\Codouble{\G}\),
 is the \(\Cst\)\nb-bialgebra \(\Bialg{\CodoubAlg}\) . 
\begin{definition}
 A pair of representations \(\rho\colon A\to\Bound(\Hils)\) 
 and~\(\theta\colon\hat{A}\to\Bound(\Hils)\) 
 is called a~\(\G\)\nb-\emph{Drinfeld pair} if it satisfies the 
 \(\G\)\nb-\emph{Drinfeld commutation relation}:
 \begin{equation}
  \label{eq:G-Drinfeld}
   \multunit_{1\rho}\multunit_{13}\multunit_{\theta3}
 = \multunit_{\theta3}\multunit_{13}\multunit_{1\rho}
 \qquad\text{in~\(\U(\hat{A}\otimes\Comp(\Hils)\otimes A)\).}
 \end{equation}
 Here~\(\multunit_{1\rho}\defeq ((\Id_{\hat{A}}\otimes\rho)\multunit)_{12}\) 
 and~\(\multunit_{\theta 3}\defeq ((\theta\otimes\Id_{A})\multunit)_{23}\).
 \end{definition}  
 Define~\(\rho\colon A\to\Bound(\textup{L}^{2}(\G)\otimes\textup{L}^{2}(\G))\) 
 and~\(\theta\colon\hat{A}\to\Bound(\textup{L}^{2}(\G)\otimes\textup{L}^{2}(\G))\) 
 by
 \[
   \rho(a)\defeq\Corep{U}(a\otimes 1_{\textup{L}^{2}(\G)})\Corep{U}^{*} 
   \qquad\text{and}\qquad
   \theta(\hat{a})\defeq 1_{\textup{L}^{2}(\G)}\otimes\hat{a}.
  \] 
  Here we drop the GNS representations of~\(A\) and 
  \(\hat{A}\) on~\(\textup{L}^{2}(\G)\). 
  \begin{proposition}
   \label{prop:Drinf_doub} 
   The pair \((\rho,\theta)\) is a \(\G\)\nb-Drinfeld pair.  
   Define \(\DrinfDoubAlg\defeq\rho(A)\cdot\theta(\hat{A})
   \subseteq\Bound(\textup{L}^{2}(\G)\otimes\textup{L}^{2}(\G))\) 
   and a map~\(\Comult[\DrinfDoubAlg]\colon\DrinfDoubAlg\to
   \Bound(\textup{L}^{2}(\G)\otimes\textup{L}^{2}(\G)\otimes
   \textup{L}^{2}(\G)\otimes\textup{L}^{2}(\G))\) by 
   \begin{equation}
    \label{eq:Drinf_doub_comult}
     \Comult[\DrinfDoubAlg](\rho(a)\cdot\theta(\hat{a}))
     \defeq(\rho\otimes\rho)\Comult[A](a)
     (\theta\otimes\theta)\DuComult[A](\hat{a})
     \quad\text{for~\(a\in A\), \(\hat{a}\in\hat{A}\).}
   \end{equation}
   Then~\(\Bialg{\DrinfDoubAlg}\) is the 
   dual quantum group of the quantum 
   codouble~\(\Codouble{\G}\).
  \end{proposition}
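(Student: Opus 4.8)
The plan is to read off the dual of the codouble directly from the multiplicative unitary \(\DuMultunit[\DrinfDoubAlg]\) of \(\Codouble{\G}\), which is already available from~\eqref{eq:mu_codoub}. By the theory of manageable multiplicative unitaries recalled in Section~\ref{sec:LCQG}, applied to \(\DuMultunit[\DrinfDoubAlg]\) in place of \(\Multunit\), the Pontrjagin dual of \(\Codouble{\G}\) is canonically determined: its \(\Cst\)\nb-algebra is the slice of \(\DuMultunit[\DrinfDoubAlg]\) in the second pair of legs, and its comultiplication is the one characterised by the analogue of~\eqref{eq:W_char_in_first_leg}. The proposition is therefore an \emph{identification} statement, and I would establish its three assertions in turn.

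First I would verify that \((\rho,\theta)\) is a \(\G\)\nb-Drinfeld pair. Inserting \(\rho(a)=\Corep{U}(a\otimes 1)\Corep{U}^{*}\) and \(\theta(\hat a)=1\otimes\hat a\) into~\eqref{eq:G-Drinfeld} and spelling out the leg notation (legs \(2,3\) carrying the space \(\textup{L}^{2}(\G)\otimes\textup{L}^{2}(\G)\) on which the pair acts), the relation becomes
\begin{equation*}
 \Corep{U}_{23}\multunit_{12}\Corep{U}^{*}_{23}\,\multunit_{14}\,\multunit_{34}
 =\multunit_{34}\,\multunit_{14}\,\Corep{U}_{23}\multunit_{12}\Corep{U}^{*}_{23}
 \qquad\text{in }\U\bigl(\textup{L}^{2}(\G)^{\otimes 4}\bigr).
\end{equation*}
I would then unfold \(\Corep{U}=\Multunit(\hat J\otimes J)\Multunit(\hat J\otimes J)\) and reduce everything, via the pentagon equation~\eqref{eq:pentagon}, to the behaviour of the cross\nb-conjugation \((\hat J\otimes J)\Multunit(\hat J\otimes J)\). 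The inputs controlling this are the implementations \(\textup{R}(a)=\hat J a^{*}\hat J\) and \(\hat{\textup{R}}(\hat a)=J\hat a^{*}J\) together with the antipode invariance \((\hat{\textup{R}}\otimes\textup{R})\Multunit=\Multunit\) of the reduced bicharacter; combining these turns conjugation by \(\Corep{U}_{23}\) into extra copies of \(\multunit\) in the remaining legs, after which both sides reduce to a common normal form. (Equivalently, this identity is the slice of the pentagon equation for \(\DuMultunit[\DrinfDoubAlg]\).)

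Next I would identify \(\DrinfDoubAlg\) with the dual \(\Cst\)\nb-algebra. Slicing \(\DuMultunit[\DrinfDoubAlg]=\Corep{U}_{12}\DuMultunit_{13}\Corep{U}^{*}_{12}\Multunit_{24}\) in legs \(3,4\): since \(\Corep{U}_{12}\) does not meet these legs it pulls outside, the slice of \(\DuMultunit_{13}\) over leg~\(3\) ranges over \(A\) on leg~\(1\), and the slice of \(\Multunit_{24}\) over leg~\(4\) ranges over \(\hat A\) on leg~\(2\); hence the slice equals \(\{\Corep{U}(a\otimes\hat a)\Corep{U}^{*}:a\in A,\ \hat a\in\hat A\}^{\CLS}\). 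The key lemma is that \(\Corep{U}\) commutes with \(1\otimes\hat A=\theta(\hat A)\): this rests again on the antipode implementations, since \(J\hat a J=\hat{\textup{R}}(\hat a)^{*}\in\hat A\) feeds back into the definition of \(\Corep{U}\). Granting it, \(\Corep{U}(a\otimes\hat a)\Corep{U}^{*}=\rho(a)\theta(\hat a)\), so the slice is exactly \(\rho(A)\cdot\theta(\hat A)=\DrinfDoubAlg\); that \(\DrinfDoubAlg\) is a \(\Cst\)\nb-algebra is then automatic, as slices of a manageable multiplicative unitary are \(\Cst\)\nb-subalgebras, while the Drinfeld commutation relation supplies the concrete closure \(\theta(\hat A)\cdot\rho(A)=\rho(A)\cdot\theta(\hat A)\).

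Finally I would match the comultiplications. The dual comultiplication induced by \(\DuMultunit[\DrinfDoubAlg]\) is characterised, as in~\eqref{eq:W_char_in_first_leg}, by \((\Comult[\DrinfDoubAlg]\otimes\Id)\DuMultunit[\DrinfDoubAlg]=(\DuMultunit[\DrinfDoubAlg])_{23}(\DuMultunit[\DrinfDoubAlg])_{13}\) in the paired legs. Feeding in the factorisation~\eqref{eq:mu_codoub} and the bicharacter identities~\eqref{eq:aux_W_char_in_first_leg} and~\eqref{eq:W_char_in_first_leg} for \(\Comult[A]\) and \(\DuComult[A]\), the \(\DuMultunit_{13}\)\nb-factor reproduces \((\rho\otimes\rho)\Comult[A]\) and the \(\Multunit_{24}\)\nb-factor reproduces \((\theta\otimes\theta)\DuComult[A]\), which is precisely~\eqref{eq:Drinf_doub_comult}; well\nb-definedness of \(\Comult[\DrinfDoubAlg]\) is then automatic, since it is the restriction of the canonical dual comultiplication. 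The main obstacle throughout is the cross\nb-conjugation \((\hat J\otimes J)\Multunit(\hat J\otimes J)\): both the Drinfeld commutation relation and the commuting lemma \([\Corep{U},1\otimes\hat A]=0\) rest on it, and it is there that manageability of \(\Multunit\) and the antipode implementations \(\textup{R},\hat{\textup{R}}\) by \(\hat J,J\) do the real work.
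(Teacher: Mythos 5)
Your strategy coincides with the paper's: treat the proposition as an identification of the Pontrjagin dual of \(\Codouble{\G}\) read off from the manageable multiplicative unitary~\eqref{eq:mu_codoub}, obtain the dual \(\Cst\)\nb-algebra by slicing \(\DuMultunit[\DrinfDoubAlg]\) over the legs carrying \(\CodoubAlg\), and verify the comultiplication through the characterisation~\eqref{eq:W_char_in_first_leg}; your comultiplication step is exactly the paper's computation. For the Drinfeld commutation relation, the paper translates~\eqref{eq:G-Drinfeld} into the same four\nb-leg identity you write down and then cites it as an intermediate step of \cite{Masuda-Nakagami-Woronowicz:C_star_alg_qgrp}*{Proposition 8.6}; your proposed direct derivation from the pentagon equation and the antipode identities is only an outline (``both sides reduce to a common normal form'' is precisely where the work of that proposition lies), so as written it replaces a citation by a sketch rather than a proof.

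The genuine error is your ``key lemma'' that \(\Corep{U}\) commutes with \(1\otimes\hat A=\theta(\hat A)\). This cannot hold in general: if \([\Corep{U},1\otimes\hat a]=0\) for all \(\hat a\in\hat A\), then
\begin{equation*}
 \rho(a)\theta(\hat a)=\Corep{U}(a\otimes 1)\Corep{U}^{*}(1\otimes\hat a)
 =\Corep{U}(a\otimes 1)(1\otimes\hat a)\Corep{U}^{*}
 =(1\otimes\hat a)\Corep{U}(a\otimes 1)\Corep{U}^{*}=\theta(\hat a)\rho(a),
\end{equation*}
so the two factors of the double would commute elementwise and \(\Drinfdouble{\G}\) would degenerate to \(A\otimes\hat A\); this contradicts the generically nontrivial relation~\eqref{eq:G-Drinfeld} and already fails in Example~\ref{ex:Drinf_doub_group_case}, where the two copies inside \(\Contvin(G)\rtimes_{\textup{conj}}G\) satisfy the conjugation covariance relation rather than commuting. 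The identity \(J\hat aJ\in\hat A\) that you invoke controls conjugation of \(\hat A\) by \(J\); it says nothing about the second leg of \(\Corep{U}\), which involves \(\Mult(A)\) and \(J\Mult(A)J\), neither of which commutes with \(\hat A\) on \(\textup{L}^{2}(\G)\). Fortunately the step does not need the lemma: slicing \(\Corep{U}_{12}\DuMultunit_{13}\Corep{U}^{*}_{12}\Multunit_{24}\) with \(\omega\) on leg \(3\) and \(\omega'\) on leg \(4\) gives directly \(\Corep{U}(a_{\omega}\otimes 1)\Corep{U}^{*}(1\otimes\hat a_{\omega'})=\rho(a_{\omega})\theta(\hat a_{\omega'})\), where \(a_{\omega}=(\Id\otimes\omega)\DuMultunit\in A\) and \(\hat a_{\omega'}=(\Id\otimes\omega')\Multunit\in\hat A\); the closed linear span is \(\rho(A)\cdot\theta(\hat A)=\DrinfDoubAlg\), which is what the paper does. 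Your intermediate description of the slice as \(\{\Corep{U}(a\otimes\hat a)\Corep{U}^{*}\}^{\CLS}\) is likewise incorrect --- it silently moves \(\Corep{U}^{*}_{12}\) past \(\Multunit_{24}\), which share leg \(2\) --- though the two errors cancel and your final answer for the slice is right.
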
 
  \begin{proof}
  The~\(\G\)\nb-Drinfeld commutation relation for 
  the pair~\((\rho,\theta)\) is equivalent to the following 
  relation
  \[
    \Corep{U}_{23}\Multunit_{12}\Corep{U}^{*}_{23}\Multunit_{14}
    \Multunit_{34}=\Multunit_{34}\Multunit_{14}
    \Corep{U}_{23}\Multunit_{12}\Corep{U}^{*}_{23};
  \]  
  then is one the intermediate steps in the proof of 
  Proposition~\(8.6\) in
  \cite{Masuda-Nakagami-Woronowicz:C_star_alg_qgrp}.
  
  The multiplicative unitary in~\eqref{eq:mu_codoub} is
  \(\DuMultunit[\DrinfDoubAlg]=\DuMultunit_{\rho 2}
  \Multunit_{\theta 3}\). The 
  manageability of~\(\Multunit\) and \(\DuMultunit\)  imply:
  \[
    \{(\Id_{\textup{L}^{2}(\G)\otimes\textup{L}^{2}(\G)}\otimes\omega\otimes
    \omega')\DuMultunit_{\rho 2}
  \Multunit_{\theta 3}:\omega,\omega'\in\Bound(\textup{L}^{2}(\G))_{*}
    \}^\CLS
    =\theta(\hat{A})\cdot\rho(A)=\DrinfDoubAlg.
  \]
  Since \(\DuMultunit[\DrinfDoubAlg]\) is also 
  manageable,~\(\DrinfDoubAlg\) is a nondegenerate~\(\Cst\)\nb-subalgebra 
  of~\(\Bound(\textup{L}^{2}(\G)\otimes\textup{L}^{2}(\G))\).
 
 Furthermore, we get 
 \(\DuMultunit_{\rho 2}\Multunit_{\theta 3}\in\U(\DrinfDoubAlg\otimes
 \CodoubAlg)\).
 
 The definition of~\(\Comult[\DrinfDoubAlg]\) gives
 \(\Comult[\DrinfDoubAlg]\in\Mor(\DrinfDoubAlg,\DrinfDoubAlg\otimes
 \DrinfDoubAlg)\). Hence it is sufficient to check that 
 \(\Comult[\DrinfDoubAlg]\) satisfies 
 \eqref{eq:W_char_in_first_leg} for 
 \(\multunit=\DuMultunit_{\rho 2}\Multunit_{\theta 3}\).
 We compute
  \begin{align*}
     (\Comult[\DrinfDoubAlg]\otimes\Id_{\hat{A}\otimes A})
        \DuMultunit_{\rho 2}\Multunit_{\theta 3}
    &=  \big(((\rho\otimes\rho)\DuComult\otimes\Id_{\hat{A}})\DuMultunit \big)_{123}
       \big(((\theta\otimes\theta)\Comult\otimes\Id_{A})\Multunit\big)_{124}\\
    &=  \DuMultunit_{\rho_{1}3}\DuMultunit_{\rho_{2}3}\Multunit_{\theta_{1}4}
          \Multunit_{\theta_{2}4}
      =  \DuMultunit_{\rho_{1}3}\Multunit_{\theta_{1}4}\DuMultunit_{\rho_{2}3}
          \Multunit_{\theta_{2}4}.    
  \end{align*} 
  The first equality 
  follows from~\eqref{eq:Drinf_doub_comult}; the second equality 
  uses~\eqref{eq:aux_W_char_in_first_leg} and 
  \eqref{eq:W_char_in_first_leg}, and~\(\rho_{i}\) or \(\theta_{i}\) 
  means~\(\rho\) or \(\theta\) acting on the~\(i\)\nb-th leg for~\(i=1,2\); 
  and the third equality uses the trivial commutation between 
  \(\DuMultunit_{\rho_{2}3}\) and~\(\Multunit_{\theta_{1}4}\).
 \end{proof} 
 \begin{definition}
  \label{def:Drinf_doub}
   The quantum group 
   \(\Drinfdouble{\G}\defeq\Bialg{\DrinfDoubAlg}\) is 
   the~\emph{Drinfeld double} of~\(\G\).
 \end{definition}  
  \begin{example}
  \label{ex:Drinf_doub_group_case} 
  Let~\(A=\Contvin(G)\) and~\(\hat{A}=\Cred(G)\) for  
  a locally compact group~\(G\). The underlying \(\Cst\)\nb-algebra 
  of the Drinfeld double of~\(G\) is \(\textup{C}(G))\rtimes_{\text{conj}}G\).
 \end{example} 
\section{The Haagerup property for the Drinfeld double}
 \label{sec:Haagerup_prop} 
  A (unitary) \emph{representation} of~\(\Qgrp{G}{A}\) on a Hilbert
  space~\(\Hils\) is a unitary \(\corep{V}\in\U(\Comp(\Hils)\otimes A)\)
  satisfying the following condition:
  \begin{equation}
    \label{eq:corep_cond}
    (\Id_{\Hils}\otimes\Comult[A])\corep{V} =\corep{V}_{12}\corep{V}_{13}
    \qquad\text{in }\U(\Comp(\Hils)\otimes A\otimes A).
  \end{equation}
  \begin{definition}[\cite{Daws-Fima-Skalski-
   White:Haagerup_prop_qnt_grp}*{Definition 5.1}] 
   \label{def:Haagerup_prop}
   A locally compact quantum group \(\Qgrp{G}{A}\) has the 
   \emph{Haagerup property} if there is a 
   representation~\(\corep{V}\in\U(\Comp(\Hils)\otimes A)\) 
   with the following properties:
   \begin{enumerate}
     \item there is a net~\(\{x_{i}\}\) of almost invariant 
              unit vectors in~\(\Hils\):   
              \begin{equation}
                \label{eq:almost_inv_vect_cond}
                \norm{\corep{V}(x_{i}\otimes \eta)-x_{i}\otimes\eta}
                \to 0 \quad\text{for all~\(\eta\in\textup{L}^{2}(\G)\).}
              \end{equation}
     \item\label{cond:mixing} \(\corep{V}\) is a mixing representation: 
              \((\omega_{x,y}\otimes\Id_{A})\corep{U}\in A\) for  all~\(x,y\in\Hils\),
              where \(\omega_{x,y}(T)\defeq \left\langle Tx, y\right\rangle\) is the vector functional.           
   \end{enumerate}
 \end{definition} 
  Next we prove our main result:
  \begin{theorem}
  \label{prop:Haagerup_prop_drinf_doub}
  Let~\(\G\) and~\(\DuG\) have the Haagerup property. 
  Then the Drinfeld double of~\(\G\) also has the Haagerup property.  
\end{theorem}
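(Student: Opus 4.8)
The plan is to produce a single representation witnessing the Haagerup property for \(\Drinfdouble{\G}\) as the external product of the two given witnesses for \(\G\) and \(\DuG\), after pushing them into \(\DrinfDoubAlg\) along the maps \(\rho\) and \(\theta\). First I would fix a representation \(X\in\U(\Comp(\Hils_{1})\otimes A)\) of \(\G\) and a representation \(Y\in\U(\Comp(\Hils_{2})\otimes\hat{A})\) of \(\DuG\) realising the Haagerup property, with nets of almost invariant unit vectors \(\{x_{i}\}\subseteq\Hils_{1}\) and \(\{y_{j}\}\subseteq\Hils_{2}\), both mixing. Specialising \(\hat{a}=1\) and \(a=1\) in~\eqref{eq:Drinf_doub_comult} yields \(\Comult[\DrinfDoubAlg]\circ\rho=(\rho\otimes\rho)\circ\Comult[A]\) and \(\Comult[\DrinfDoubAlg]\circ\theta=(\theta\otimes\theta)\circ\DuComult[A]\), so \(\rho\) and \(\theta\) intertwine the comultiplications. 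Hence \(X_{\rho}\defeq(\Id\otimes\rho)X\) and \(Y_{\theta}\defeq(\Id\otimes\theta)Y\) are themselves representations of \(\Drinfdouble{\G}\), and on \(\Hils\defeq\Hils_{1}\otimes\Hils_{2}\) I would set \(\corep{V}\defeq(X_{\rho})_{13}(Y_{\theta})_{23}\in\U(\Comp(\Hils)\otimes\DrinfDoubAlg)\).

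Next I would confirm that \(\corep{V}\) satisfies the representation condition~\eqref{eq:corep_cond}. Writing the legs of \(\Comp(\Hils)\otimes\DrinfDoubAlg\otimes\DrinfDoubAlg\) as \(\Hils_{1},\Hils_{2}\) and two copies of \(\DrinfDoubAlg\), applying \(\Id\otimes\Comult[\DrinfDoubAlg]\) factorwise and using that \(X\) and \(Y\) are representations turns \((\Id\otimes\Comult[\DrinfDoubAlg])\corep{V}\) into \((X_{\rho})_{13}(X_{\rho})_{14}(Y_{\theta})_{23}(Y_{\theta})_{24}\), while the right-hand side of~\eqref{eq:corep_cond} expands to \((X_{\rho})_{13}(Y_{\theta})_{23}(X_{\rho})_{14}(Y_{\theta})_{24}\). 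These coincide exactly because \((X_{\rho})_{14}\) and \((Y_{\theta})_{23}\) occupy disjoint legs — namely \(\Hils_{1}\) with the second \(\DrinfDoubAlg\)\nb-factor, against \(\Hils_{2}\) with the first — and hence commute. I would stress that this uses no Drinfeld commutation relation: because \(X\) and \(Y\) sit on different Hilbert spaces, their external product is automatically a representation.

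It then remains to verify the two clauses of Definition~\ref{def:Haagerup_prop} for \(\corep{V}\), recalling that \(\DrinfDoubAlg\) acts on \(\textup{L}^{2}(\Drinfdouble{\G})=\textup{L}^{2}(\G)\otimes\textup{L}^{2}(\G)\). For mixing I would compute, on elementary vectors \(\xi=x\otimes y\) and \(\xi'=x'\otimes y'\), that the slice factorises, \((\omega_{\xi,\xi'}\otimes\Id)\corep{V}=\rho\big((\omega_{x,x'}\otimes\Id)X\big)\,\theta\big((\omega_{y,y'}\otimes\Id)Y\big)\in\rho(A)\cdot\theta(\hat{A})=\DrinfDoubAlg\), using that \(X\) and \(Y\) are mixing; the general case follows by density. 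For almost invariance I would test \(\{x_{i}\otimes y_{j}\}\) against \(\eta\in\textup{L}^{2}(\G)\otimes\textup{L}^{2}(\G)\) and bound the two factors of \(\corep{V}\) separately via \(\norm{\corep{V}\Xi-\Xi}\le\norm{(Y_{\theta})_{23}\Xi-\Xi}+\norm{(X_{\rho})_{13}\Xi-\Xi}\) with \(\Xi=x_{i}\otimes y_{j}\otimes\eta\).

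The factor \(Y_{\theta}\) is harmless, since \(\theta(\hat{a})=1\otimes\hat{a}\) acts only on \(\Hils_{2}\) and the second \(\textup{L}^{2}(\G)\)\nb-leg, so almost invariance of \(Y\) applies at once. The main obstacle is the factor \(X_{\rho}\), whose representation \(\rho(a)=\Corep{U}(a\otimes 1)\Corep{U}^{*}\) is twisted by \(\Corep{U}\); here I would write \(X_{\rho}=\Corep{U}_{23}X_{12}\Corep{U}_{23}^{*}\) on \(\Hils_{1}\otimes\textup{L}^{2}(\G)\otimes\textup{L}^{2}(\G)\), conjugate out the unitary \(\Corep{U}\), and thereby reduce \(\norm{(X_{\rho})_{13}\Xi-\Xi}\) to \(\norm{X_{12}(x_{i}\otimes\Corep{U}^{*}\eta)-x_{i}\otimes\Corep{U}^{*}\eta}\). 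Expanding \(\Corep{U}^{*}\eta\in\textup{L}^{2}(\G)\otimes\textup{L}^{2}(\G)\) in an orthonormal basis of the inert third leg and truncating, one applies almost invariance of \(X\) on \(\textup{L}^{2}(\G)\) termwise; passing to the product net over \(\{(i,j)\}\) then makes \(\{x_{i}\otimes y_{j}\}\) almost invariant for \(\corep{V}\), which finishes the argument.
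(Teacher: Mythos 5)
Your proposal is correct, and it follows the paper's own strategy for everything except the almost-invariance step: the construction of \(\corep{V}=\corep{X}_{1\rho}\corep{Y}_{2\theta}\), the verification of the representation condition via the commutation of operators in disjoint legs, and the factorisation of the slice maps for mixing are all exactly as in the paper. Where you diverge is in proving that \(\{x_{i}\otimes y_{j}\}\) is almost invariant. The paper passes to the Hilbert-module picture: it invokes \cite{Daws-Fima-Skalski-White:Haagerup_prop_qnt_grp}*{Proposition 2.7} to replace condition~\eqref{eq:almost_inv_vect_cond} by the equivalent condition~\eqref{eq:eqiv_almost_inv_vect_cond} on elements \(a\in A\), and then uses that \(\rho\in\Mor(A,\DrinfDoubAlg)\) together with an approximate identity \(\{e^{A}_{\lambda}\}\) and a \(3\epsilon\)-argument to upgrade invariance relative to \(\rho(A)\) to invariance relative to all of \(\DrinfDoubAlg\); this route is manifestly independent of the choice of Hilbert space on which \(\DrinfDoubAlg\) is represented. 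You instead stay in the Hilbert-space picture on \(\textup{L}^{2}(\G)\otimes\textup{L}^{2}(\G)\), absorb the twist by writing \(\rho(a)=\Corep{U}(a\otimes 1)\Corep{U}^{*}\) and conjugating the unitary \(\Corep{U}\) out of the norm, and then handle the extra tensor leg by an orthonormal-basis expansion with a uniform tail estimate (using \(\norm{\corep{X}(x_i\otimes\zeta_k)-x_i\otimes\zeta_k}\le 2\norm{\zeta_k}\)). Both arguments are sound; yours is more elementary and self-contained in that it needs neither the module reformulation nor the external citation, while the paper's is cleaner in that it never has to unwind the conjugation by \(\Corep{U}\) or worry about which concrete representation of \(\DrinfDoubAlg\) realises \(\textup{L}^{2}(\Drinfdouble{\G})\). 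The only points you should spell out are (i) that the same amplification argument is also needed for the \(\corep{Y}_{2\theta}\) factor (the first \(\textup{L}^{2}(\G)\)-leg is inert there, so it is not quite ``at once''), and (ii) that the concrete action of \(\DrinfDoubAlg\) on \(\textup{L}^{2}(\G)\otimes\textup{L}^{2}(\G)\) is the standard one, so that testing against vectors there is the right formulation of~\eqref{eq:almost_inv_vect_cond} for \(\Drinfdouble{\G}\); neither point is a genuine gap.
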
   
\begin{proof} 
 Let~\(\corep{X}\in\U(\Comp(\Hils_{1})\otimes A)\) and 
 \(\corep{Y}\in\U(\Comp(\Hils_{2})\otimes\hat{A})\) be 
 representations of~\(\G\) and \(\DuG\), respectively.
 
  Define~\(\corep{V}\defeq \corep{X}_{1\rho}
 \corep{Y}_{2\theta}\in\U(\Comp(\Hils_{1}\otimes\Hils_{2})\otimes
 \DrinfDoubAlg_{\bichar})\). Equations~\eqref{eq:Drinf_doub_comult} and
 \eqref{eq:corep_cond} for~\(\corep{X}\) and~\(\corep{Y}\) give:
 \[
       (\Id_{\Hils_{1}\otimes\Hils_{2}}\otimes\Comult[\DrinfDoubAlg_{\bichar}])
       \corep{X}_{1\rho}\corep{Y}_{2\theta}
    = \corep{X}_{1\rho_{3}}\corep{X}_{1\rho_{4}}\corep{Y}_{2\theta_{3}}
       \corep{Y}_{2\theta_{4}}
    =  \corep{X}_{1\rho_{3}}\corep{Y}_{2\theta_{3}}
        \corep{X}_{1\rho_{4}}\corep{Y}_{2\theta_{4}}.
\]    
 Hence \(\corep{V}\) is a representation of the Drinfeld double 
 \(\Drinfdouble{\G}\) on~\(\Hils_{1}\otimes\Hils_{2}\). 
 
 Let~\(\corep{X}\) and~\(\corep{Y}\) satisfy the 
 mixing condition~\ref{cond:mixing} in  
 Definition~\ref{def:Haagerup_prop}. Then 
 we get the mixing condition for~\(\corep{V}\):
 \[
    (\omega_{x_{1}\otimes y_{1},x_{2}\otimes y_{2}}\otimes\Id_{\DrinfDoubAlg_{\bichar}})
    (\corep{X}_{1\rho}\corep{Y}_{2\theta})
    =\rho\big((\omega_{x_{1},y_{1}}\otimes\Id_{A})\corep{X}\big)
       \theta\big(\omega_{y_{1},y_{2}}\otimes\Id_{\hat{A}})\corep{Y}\big)
    \in\DrinfDoubAlg_{\bichar}
\]
 for all~\(x_{1}, x_{2}\in\Hils_{1}\), \(y_{1}, y_{2}\in\Hils_{2}\).

 Let~\(\Corep{X}\in\Hils[L](\Hils_{1}\otimes A)\) be the image of 
 \(\corep{X}\in\U(\Comp(\Hils_{1})\otimes A)\) under the 
 canonical isomorphism between \(\Mult(\Comp(\Hils_{1})\otimes A)\)
 and~\(\Hils[L](\Hils_{1}\otimes A)\) (the space of adjointable operators on the 
 Hilbert~\(A\)\nb-module~\(\Hils_{1}\otimes A\)). By 
 \cite{Daws-Fima-Skalski-White:Haagerup_prop_qnt_grp}*{Proposition 2.7},  
 condition~\eqref{eq:almost_inv_vect_cond} becomes equivalent to the 
 following condition:
  \begin{equation}
    \label{eq:eqiv_almost_inv_vect_cond}
      \norm{\Corep{X}(x_{i}\otimes a)-x_{i}\otimes a}
       \to 0 \quad\text{for all~\(a\in A\).}
   \end{equation} 
 Let~\(\{x_{i}\}\) be the net of almost invariant 
 unit vectors in~\(\Hils_{1}\) for~\(\corep{X}\). 
 Let \(d\in\DrinfDoubAlg_{\bichar}\) and let 
 \(\{e^{A}_{\lambda}\}\) be a  bounded approximate identity in~\(A\). 
 The construction of~\(\DrinfDoubAlg\) gives \(\rho\in\Mor(A,\DrinfDoubAlg)\).
 Given any~\(\epsilon>0\) there exists~\(\lambda'\) such that
 \(\norm{\rho(e^{A}_{\lambda'})d-d}<\epsilon/3\); hence 
 \[
   \norm{\Corep{X}_{1\rho}(x_{i}\otimes d)-\Corep{X}_{1\rho}(x_{i}\otimes\rho(e^{A}
   _{\lambda'})d)}<\epsilon/3,
  \qquad
   \norm{x_{i}\otimes\rho(e^{A}_{\lambda'})d-x_{i}\otimes d}<\epsilon/3
 \]  
 for each~\(x_{i}\).
 Furthermore, by~\eqref{eq:eqiv_almost_inv_vect_cond}, 
 there exists \(i'\) such that  for~\(i\geq i'\)
 \[
   \norm{\Corep{X}_{1\rho}(x_{i}\otimes\rho(e^{A}_{\lambda'})d)
   -x_{i}\otimes\rho(e^{A}_{\lambda'})d}\leq\norm{d}\cdot
   \norm{(\Corep{X}_{1\rho}(x_{i}\otimes\rho(e^{A}_{\lambda'}))
   -x_{i}\otimes\rho(e^{A}_{\lambda'})}<\epsilon/3.
 \]
 The last two estimates together give
 \begin{multline*}
    \norm{\Corep{X}_{1\rho}(x_{i}\otimes d)-(x_{i}\otimes d)}
    \leq 
       \norm{\Corep{X}_{1\rho}(x_{i}\otimes d)-\Corep{X}_{1\rho}
       (x_{i}\otimes\rho(e^{A}_{\lambda'})d)}+\\
    \norm{\Corep{X}_{1\rho}(x_{i}\otimes\rho(e^{A}_{\lambda'})d)
    -x_{i}\otimes\rho(e^{A}_{\lambda'})d}+
    \norm{x_{i}\otimes\rho(e^{A}_{\lambda'})d-x_{i}\otimes d}
    <\epsilon\quad\text{for~\(i\geq i'\).}
 \end{multline*}  
 Since~\(\epsilon>0\) is arbitrary, we get 
 \(\lim\limits_{i}\norm{\Corep{X}_{1\rho}(x_{i}\otimes d)-(x_{i}\otimes 
 d)}=0\) for~\(d\in\DrinfDoubAlg_{\bichar}\).
  
 Similarly, for a net of almost invariant 
 unit vectors~\(\{y_{j}\}\) in~\(\Hils_{2}\) for~\(\corep{Y}\),  
 we have 
 \(
   \lim\limits_{j}\norm{\Corep{Y}_{1\theta}(y_{j}\otimes d)
   -y_{j}\otimes d}=0\) for~\(d\in\DrinfDoubAlg_{\bichar}\).
   
 Finally, we show that~\(\{x_{i}\otimes y_{j}\}\) is an almost invariant unit vector for 
 \(\Corep{X}_{1\rho}\Corep{Y}_{2\theta}\). We compute 
 \begin{align*}
  & \norm{\Corep{X}_{1\rho}\Corep{Y}_{2\theta}(x_{i}\otimes y_{j}\otimes d)
  -x_{i}\otimes y_{j}\otimes d}\\
 &\leq \norm{\Corep{X}_{1\rho}\Corep{Y}_{2\theta}(x_{i}\otimes y_{j}\otimes d)
   -\Corep{X}_{1\rho}(x_{i}\otimes y_{j}\otimes d)} +
   \norm{\Corep{X}_{1\rho}(x_{i}\otimes y_{j}\otimes d)-x_{i}\otimes y_{j}\otimes d}.
\end{align*}     
Therefore,~\(\lim\limits_{i,j}\norm{\Corep{X}_{1\rho}\Corep{Y}_{2\theta}
(x_{i}\otimes y_{j}\otimes d)-x_{i}\otimes y_{j}\otimes d}=0\) for~\(d\in\DrinfDoubAlg_{\bichar}\). 
\end{proof}

   \begin{corollary}
    \label{cor:Haag_drinf}
      Assume~\(\G\) and~\(\DuG\) are coamenable. Then the 
      Drinfeld double~\(\Drinfdouble{\G}\) has the Haagerup property. 
   \end{corollary}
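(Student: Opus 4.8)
The plan is to deduce this corollary from the main theorem by converting the coamenability hypotheses into the Haagerup property for both~\(\G\) and~\(\DuG\). The essential input is \cite{Daws-Fima-Skalski-White:Haagerup_prop_qnt_grp}*{Proposition 5.2}, recalled in the introduction, which guarantees that a quantum group has the Haagerup property as soon as its Pontrjagin dual is coamenable.

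First I would apply this result to~\(\G\) directly: since \(\DuG\) is coamenable by assumption, \(\G\) has the Haagerup property. Next I would apply the same proposition with \(\DuG\) in the role of the quantum group. By the biduality \(\widehat{\widehat{\G}}\cong\G\) recorded in Section~\ref{sec:LCQG}, the Pontrjagin dual of~\(\DuG\) is~\(\G\), so the coamenability of~\(\G\)---the remaining hypothesis---shows that \(\DuG\) has the Haagerup property.

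With both \(\G\) and \(\DuG\) now known to have the Haagerup property, Theorem~\ref{prop:Haagerup_prop_drinf_doub} applies and yields that \(\Drinfdouble{\G}\) has the Haagerup property. I do not anticipate any genuine obstacle; the only subtlety is the bookkeeping of biduality, which lets the coamenability of~\(\G\) supply the Haagerup property of its dual~\(\DuG\) through the dual of the dual.
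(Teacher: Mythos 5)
Your proposal is correct and follows exactly the paper's argument: apply \cite{Daws-Fima-Skalski-White:Haagerup_prop_qnt_grp}*{Proposition 5.2} twice (using biduality to handle~\(\DuG\)) to get the Haagerup property for both~\(\G\) and~\(\DuG\), then invoke Theorem~\ref{prop:Haagerup_prop_drinf_doub}. The paper's proof is just a terser version of the same reasoning.
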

   \begin{proof} 
    By~\cite{Daws-Fima-Skalski-White:Haagerup_prop_qnt_grp}*{Proposition 5.2}, both 
    \(\G\) and~\(\DuG\) have the Haagerup property; hence \(\Drinfdouble{\G}\) 
    has the Haagerup property. 
   \end{proof}
\begin{remark}
 The proof of Theorem~\ref{prop:Haagerup_prop_drinf_doub} also works in more general 
 framework of locally compact quantum groups constructed from manageable multiplicative 
 unitaries~\cite{Woronowicz:Mult_unit_to_Qgrp}. Furthermore, one can generalise 
 the statement of Theorem~\ref{prop:Haagerup_prop_drinf_doub} 
 by replacing~\(\DuG\) by another locally compact quantum group 
 \(\G[H]\) with the Haagerup property, and the Drinfeld of~\(\G\) by generalised 
 Drinfeld double of \(\G\) and~\(\G[H]\) with respect to a given bicharacter 
 (see~\cite{Roy:Codoubles}) between them. 
\end{remark}
\begin{proposition}
 \label{prop:Haagerup_1} 
 The Drinfeld doubles of locally compact groups~\(G\) with the Haagerup property, 
 coamenable compact quantum groups~\(\G\), extended quantum \(\textup{SU}(1,1)\), 
 quantum~\(ax+b\), quantum~\(az+b\) and quantum \(\textup{E}(2)\) groups have
 the Haagerup property, respectively. In particular, the Drinfeld doubles of~\(\mathbb{F}_{n}\) 
 (free group with~\(n(\geq 2)\) generators) and the duals of quantum Lorentz groups have the Haagerup property.
\end{proposition}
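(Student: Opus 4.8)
The plan is to reduce every claim to Theorem~\ref{prop:Haagerup_prop_drinf_doub}. That theorem produces the Haagerup property for \(\Drinfdouble{\G}\) from the Haagerup property of the \emph{pair} \((\G,\DuG)\), so for each family in the list it suffices to verify that \emph{both} \(\G\) and its dual \(\DuG\) have the Haagerup property and then apply the theorem. In most cases one of the two verifications is handled by coamenability via \cite{Daws-Fima-Skalski-White:Haagerup_prop_qnt_grp}*{Proposition 5.2}, which deduces the Haagerup property of a quantum group from coamenability of its dual.

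For a coamenable compact quantum group \(\G\) I would argue as follows: \(\G\) itself has the Haagerup property because every compact quantum group does, while its discrete dual \(\DuG\) has it by \cite{Daws-Fima-Skalski-White:Haagerup_prop_qnt_grp}*{Proposition 5.2} applied to \(\DuG\), whose own dual \(\widehat{\DuG}=\G\) is coamenable by hypothesis. For quantum~\(ax+b\), quantum~\(az+b\) and \(\textup{E}_{q}(2)\) both the group and its dual have the Haagerup property by \cite{Daws-Fima-Skalski-White:Haagerup_prop_qnt_grp}*{Example 5.4}; these groups are amenable and coamenable, so Proposition~5.2 applies on both sides. For the extended quantum~\(\textup{SU}(1,1)\) groups the required two-sided statement is exactly the content of Caspers~\cite{Caspers:Weak_amenb_SU_1_1}. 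In each instance Theorem~\ref{prop:Haagerup_prop_drinf_doub} then yields the conclusion.

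The only case calling for a genuine computation is that of a classical locally compact group \(G\), where \(A=\Contvin(G)\) and \(\hat{A}=\Cred(G)\) (cf.\ Example~\ref{ex:Drinf_doub_group_case}). Here \(\Contvin(G)\) is commutative, hence coamenable with bounded counit given by evaluation at the unit, so \(\DuG=\Cred(G)\) has the Haagerup property by \cite{Daws-Fima-Skalski-White:Haagerup_prop_qnt_grp}*{Proposition 5.2}. For \(\G=\Contvin(G)\) I would unwind Definition~\ref{def:Haagerup_prop}: a representation \(\corep{V}\in\U(\Comp(\Hils)\otimes\Contvin(G))\) is precisely a suitably continuous unitary representation \(g\mapsto V(g)\) of \(G\) on \(\Hils\); the mixing condition says its matrix coefficients \(g\mapsto\langle V(g)x,y\rangle\) lie in \(\Contvin(G)\), i.e.\ vanish at infinity, and condition~\eqref{eq:almost_inv_vect_cond} says the trivial representation is weakly contained. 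Thus \(\Contvin(G)\) has the Haagerup property exactly when \(G\) has the classical Haagerup property, which is the standing hypothesis, and Theorem~\ref{prop:Haagerup_prop_drinf_doub} applies once more.

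The two ``in particular'' assertions then follow immediately: the free groups \(\mathbb{F}_{n}\) (\(n\geq2\)) have the classical Haagerup property and so are covered by the previous paragraph; and since the quantum Lorentz group is the quantum codouble \(\Codouble{\textup{SU}_{q}(2)}\) of \(\textup{SU}_{q}(2)\) \cite{Podles-Woronowicz:Quantum_deform_Lorentz}, its dual is, by Proposition~\ref{prop:Drinf_doub}, the Drinfeld double \(\Drinfdouble{\textup{SU}_{q}(2)}\) of the coamenable compact quantum group \(\textup{SU}_{q}(2)\), which is treated by the compact case. I expect the main point of care to be the classical case: correctly matching Definition~\ref{def:Haagerup_prop} to the classical a-T-menability of \(G\), and keeping straight that it is \(\Contvin(G)\) that carries the hypothesis while \(\Cred(G)\) is automatic. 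Everything else is bookkeeping that feeds inputs already available in the literature into Theorem~\ref{prop:Haagerup_prop_drinf_doub}.
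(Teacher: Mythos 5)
Your proposal is correct and follows essentially the same route as the paper: in every case it verifies the Haagerup property for both \(\G\) and \(\DuG\) (via coamenability and \cite{Daws-Fima-Skalski-White:Haagerup_prop_qnt_grp}*{Proposition 5.2}, Caspers' results, or the classical equivalence for \(\Contvin(G)\)) and then feeds the pair into Theorem~\ref{prop:Haagerup_prop_drinf_doub}. The only cosmetic difference is in the compact case, where the paper invokes coamenability of the discrete dual together with Corollary~\ref{cor:Haag_drinf} rather than the blanket fact that all compact quantum groups have the Haagerup property; the two arguments are interchangeable.
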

\begin{proof}
 Let~\(G\) be a locally compact group; hence~\(\Contvin(G)\) 
 is coamenable. By~\cite{Daws-Fima-Skalski-White:Haagerup_prop_qnt_grp}*{Proposition 5.2}, 
 \(\Cred(G)\) (as a quantum group) has the Haagerup property. Furthermore, if~\(G\) has 
 the Haagerup property if and only if~\(\Contvin(G)\) has the Haagerup property.
 Consequently, by Theorem~\ref{prop:Haagerup_prop_drinf_doub}, 
 the Drinfeld double of~\(G\), namely, \(\Contvin(G)\rtimes_{\textup{conj}}G\) 
 has the Haagerup property if and only if~\(G\) has the Haagerup 
 property.  In particular, the Drinfeld doubles of~\(\mathbb{F}_{n}\) 
 (free group with~\(n(\geq 2)\) generators) have the Haagerup property.  
 
 It is shown in~\cite{Caspers:Weak_amenb_SU_1_1} that  
 the extended quantum~\(\textup{SU}(1,1)\) groups  
 have the Haagerup property. By \cite{Caspers:Weak_amenb_SU_1_1}*{Theorem 8.3}, 
 their duals  also have the Haagerup property. By Theorem~\ref{prop:Haagerup_prop_drinf_doub}, 
 Drinfeld doubles of the extended~\(\textup{SU}_{q}(1,1)\)  has the 
 Haagerup property.
 
  Let~\(\G\) be a compact quantum group. By,  
 \cite{Bedos-Murphy-Tuset:Amen_coamen_alg_qgrp}*{Proposition 4.1}, 
 the discrete quantum group~\(\DuG\) is always coamenable. Additionally,  
 if \(\G\) is coamenable, by Corollary~\ref{cor:Haag_drinf}, 
 \(\Drinfdouble{\G}\) is amenable and has the Haagerup property. 
 In particular, \(\textup{SU}_{q}(2)\) is a compact quantum group and 
 coamenable (see~\cite{Bedos-Tuset-Murphy:Coamen_cpt_qgrp}). Hence, 
 the Drinfeld double of~\(\textup{SU}_{q}(2)\), dual of the quantum 
 Lorentz group in~\cite{Podles-Woronowicz:Quantum_deform_Lorentz}, 
 has the Haagerup property.
 
 Finally, \cite{Daws-Fima-Skalski-White:Haagerup_prop_qnt_grp}*{Example 5.4} 
 shows that quantum~\(ax+b\), quantum~\(az+b\) and quantum \(\textup{E}(2)\) 
 groups have the Haagerup property. Therefore, by Corollary~\ref{cor:Haag_drinf} 
 we conclude that their respective Drinfeld doubles have the Haagerup property. 
\end{proof}

\section*{Acknowledgements}
This research was conducted at the Fields Institute during
the thematic program on Abstract Harmonic Analysis, Banach and 
Operator Algebras while the author was financially supported by a Fields\nb-Ontario postdoctoral 
fellowship. Apart from that, parts of the manuscript had been revised while the author had been 
additionally supported by NSERC and ERA at the University of Ottawa. 
The author gratefully thanks Professor Nico Spronk for his helpful discussions and comments. 
  
\begin{bibdiv}
  \begin{biblist}
\bib{Baaj-Skandalis:Unitaires}{article}{
  author={Baaj, Saad},
  author={Skandalis, Georges},
  title={Unitaires multiplicatifs et dualit\'e pour les produits crois\'es de $C^*$\nobreakdash -alg\`ebres},
  journal={Ann. Sci. \'Ecole Norm. Sup. (4)},
  volume={26},
  date={1993},
  number={4},
  pages={425--488},
  issn={0012-9593},
  review={\MRref {1235438}{94e:46127}},
  eprint={http://www.numdam.org/item?id=ASENS_1993_4_26_4_425_0},
}

\bib{Baaj-Vaes:Double_cros_prod}{article}{
  author={Baaj, Saad},
  author={Vaes, Stefaan},
  title={Double crossed products of locally compact quantum groups},
  journal={J. Inst. Math. Jussieu},
  volume={4},
  date={2005},
  number={1},
  pages={135--173},
  issn={1474-7480},
  review={\MRref {2115071}{2006h:46071}},
  doi={10.1017/S1474748005000034},
}

\bib{Bedos-Murphy-Tuset:Amen_coamen_alg_qgrp}{article}{
  author={B\'edos, Erik},
  author={Murphy, Gerard J.},
  author={Tuset, Lars},
  title={Amenability and coamenability of algebraic quantum groups},
  journal={Int. J. Math. Math. Sci.},
  volume={31},
  date={2002},
  number={10},
  pages={577--601},
  issn={0161-1712},
  review={\MRref {1931751}{2003j:46107}},
  doi={10.1155/S016117120210603X},
}

\bib{Bedos-Tuset:Amen_coamen_lcqg}{article}{
  author={B\'edos, Erik},
  author={Tuset, Lars},
  title={Amenability and co-amenability for locally compact quantum groups},
  journal={Internat. J. Math.},
  volume={14},
  date={2003},
  number={8},
  pages={865--884},
  issn={0129-167X},
  review={\MRref {2013149}{2004k:4612}},
  doi={10.1142/S0129167X03002046},
}

\bib{Bedos-Tuset-Murphy:Coamen_cpt_qgrp}{article}{
  author={B\'edos, Erik},
  author={Murphy, Gerard. J.},
  author={Tuset, Lars},
  title={Co-amenability of compact quantum groups},
  journal={J. Geom. Phys.},
  volume={40},
  date={2001},
  number={2},
  pages={130--153},
  issn={0393-0440},
  review={\MRref {1862084}{2002m:46100}},
  doi={10.1016/S0393-0440(01)00024-9},
}

\bib{Caspers:Weak_amenb_SU_1_1}{article}{
  author={Caspers, Martijn},
  title={Weak amenability of locally compact quantum groups and approximation properties of extended quantum $\textup {SU}(1,1)$},
  journal={Comm. Math. Phys.},
  status={to appear},
  date={2013},
  note={\arxiv {1306.4558v2}},
}

\bib{Daws-Fima-Skalski-White:Haagerup_prop_qnt_grp}{article}{
  author={Daws, Matthew},
  author={Fima, Pierre},
  author={Skalski, Adam G.},
  author={White, Stuart A.},
  title={The Haagerup property for locally compact quantum groups},
  journal={J. Reine Angew. Math.},
  status={to appear},
  date={2014},
  note={\arxiv {1303.3261v3}},
}

\bib{Drinfeld:Quantum_groups}{article}{
  author={Drinfel'd, V. G.},
  title={Quantum groups},
  conference={ title={International Congress of Mathematicians}, place={Berkeley, Calif.}, date={1986}, },
  pages={798--820},
  publisher={Amer. Math. Soc.},
  place={Providence, RI},
  date={1987},
  review={\MRref {934283}{89f:17017}},
}

\bib{Koelink-Kustermans:normalizer_SU_1_1}{article}{
  author={Koelink, Erik},
  author={Kustermans, Johan},
  title={A locally compact quantum group analogue of the normalizer of $\mathrm {SU}(1,1)$ in $\mathrm {SL}(2,\mathbb C)$},
  journal={Comm. Math. Phys.},
  volume={233},
  date={2003},
  number={2},
  pages={231--296},
  issn={0010-3616},
  review={\MRref {1962042}{2004c:46136}},
}

\bib{Kustermans-Vaes:LCQG}{article}{
  author={Kustermans, Johan},
  author={Vaes, Stefaan},
  title={Locally compact quantum groups},
  journal={Ann. Sci. \'Ecole Norm. Sup. (4)},
  volume={33},
  date={2000},
  number={6},
  pages={837--934},
  issn={0012-9593},
  review={\MRref {1832993}{2002f:46108}},
  doi={10.1016/S0012-9593(00)01055-7},
}

\bib{Masuda-Nakagami-Woronowicz:C_star_alg_qgrp}{article}{
  author={Masuda, Tetsuya},
  author={Nakagami, Y.},
  author={Woronowicz, Stanis\l aw Lech},
  title={A $C^*$\nobreakdash -algebraic framework for quantum groups},
  journal={Internat. J. Math},
  volume={14},
  date={2003},
  number={9},
  pages={903--1001},
  issn={0129-167X},
  review={\MRref {2020804}{2004j:46100}},
  doi={10.1142/S0129167X03002071},
}

\bib{Nest-Voigt:Poincare}{article}{
  author={Nest, Ryszard},
  author={Voigt, {Ch}ristian},
  title={Equivariant Poincar\'e duality for quantum group actions},
  journal={J. Funct. Anal.},
  volume={258},
  date={2010},
  number={5},
  pages={1466--1503},
  issn={0022-1236},
  review={\MRref {2566309}{2011d:46143}},
  doi={10.1016/j.jfa.2009.10.015},
}

\bib{Podles-Woronowicz:Quantum_deform_Lorentz}{article}{
  author={Podle\'s, Piotr},
  author={Woronowicz, Stanis\l aw Lech},
  title={Quantum deformation of Lorentz group},
  journal={Comm. Math. Phys.},
  volume={130},
  date={1990},
  number={2},
  pages={381--431},
  issn={0010-3616},
  review={\MRref {1059324}{91f:46100}},
  eprint={http://projecteuclid.org/euclid.cmp/1104200517},
}

\bib{Roy:Codoubles}{article}{
  author={Roy, Sutanu},
  title={The Drinfeld double for $C^*$\nobreakdash -algebraic quantum groups},
  journal={J. Operator Theory},
  status={accepted},
  note={\arxiv {1404.5384v4}},
  date={2015},
}

\bib{Soltan-Woronowicz:Remark_manageable}{article}{
  author={So\l tan, Piotr M.},
  author={Woronowicz, Stanis\l aw Lech},
  title={A remark on manageable multiplicative unitaries},
  journal={Lett. Math. Phys.},
  volume={57},
  date={2001},
  number={3},
  pages={239--252},
  issn={0377-9017},
  review={\MRref {1862455}{2002i:46072}},
  doi={10.1023/A:1012230629865},
}

\bib{Woronowicz:Qnt_E2_and_Pontr_dual}{article}{
  author={Woronowicz, Stanis\l aw Lech},
  title={Quantum $E(2)$ group and its Pontryagin dual},
  journal={Lett. Math. Phys.},
  volume={23},
  date={1991},
  number={4},
  pages={251--263},
  issn={0377-9017},
  review={\MRref {1152695}{93b:17058}},
  doi={10.1007/BF00398822},
}

\bib{Woronowicz:Mult_unit_to_Qgrp}{article}{
  author={Woronowicz, Stanis\l aw Lech},
  title={From multiplicative unitaries to quantum groups},
  journal={Internat. J. Math.},
  volume={7},
  date={1996},
  number={1},
  pages={127--149},
  issn={0129-167X},
  review={\MRref {1369908}{96k:46136}},
  doi={10.1142/S0129167X96000086},
}

\bib{Woronowicz:CQG}{article}{
  author={Woronowicz, Stanis\l aw Lech},
  title={Compact quantum groups},
  conference={ title={Sym\'etries quantiques}, address={Les Houches}, date={1995}, },
  book={ publisher={North-Holland}, place={Amsterdam}, },
  date={1998},
  pages={845--884},
  review={\MRref {1616348}{99m:46164}},
}

\bib{Woronowicz:Multiplicative_Unitaries_to_Quantum_grp}{article}{
  author={Woronowicz, Stanis\l aw Lech},
  title={From multiplicative unitaries to quantum groups},
  journal={Internat. J. Math.},
  volume={7},
  date={1996},
  number={1},
  pages={127--149},
  issn={0129-167X},
  review={\MRref {1369908}{96k:46136}},
  doi={10.1142/S0129167X96000086},
}

\bib{Woronowicz:Quantum_azb}{article}{
  author={Woronowicz, Stanis\l aw Lech},
  title={Quantum `$az+b$' group on complex plane},
  journal={Internat. J. Math.},
  volume={12},
  date={2001},
  number={4},
  pages={461--503},
  issn={0129-167X},
  review={\MRref {1841400}{2002g:46120}},
  doi={10.1142/S0129167X01000836},
}

\bib{Woronowicz-Zakrzewski:Quantum_axb}{article}{
  author={Woronowicz, Stanis\l aw Lech},
  author={Zakrzewski, Stanis\l aw},
  title={Quantum `$ax+b$' group},
  journal={Rev. Math. Phys.},
  volume={14},
  date={2002},
  number={7-8},
  pages={797--828},
  issn={0129-055X},
  review={\MRref {1932667}{2003h:46106}},
  doi={10.1142/S0129055X02001405},
}
\end{biblist}
\end{bibdiv}

\end{document}